  \tikzset{
    vertex/.style={circle,draw,minimum size=1.5em},
    edge/.style={->,> = latex'}
}
\newtheorem{theorem}{Theorem}[section]
\newtheorem*{theorem*}{Theorem}
\newtheorem{proposition}[theorem]{Proposition}
\newtheorem{lemma}[theorem]{Lemma}
\newtheorem{corollary}[theorem]{Corollary}
\theoremstyle{definition}
\newtheorem{definition}[theorem]{Definition}
\newtheorem{example}[theorem]{Example}
\DeclareMathOperator{\Lim}{Lim}
\DeclareMathOperator{\cl}{cl}
\begin{document}
\title[Dimension bounds for subfractals induced by $S$-gap shifts]{Hausdorff and box dimension bounds for subfractals induced by $S$-gap shifts}
\author{Elizabeth Sattler}
\address{Lawrence University, 711 E John St, Appleton, WI 54911}
\email{sattlere@lawrence.edu}
\subjclass{Primary 28A80; Secondary 37B10}
\keywords{Hausdorff dimension, box dimension, subfractal, $S$-gap shift}

\maketitle

\begin{abstract}
In this paper, we consider subsets of an attractor of an iterated function system in which each point is associated with an allowable word from an $S$-gap shift.  The main result shows that bounds for the box dimension and Hausdorff dimension of a subfractal induced by an $S$-gap shift are given by the zeros of the upper and lower topological pressure functions. 
\end{abstract}
\section{Introduction}
Symbolic spaces are often used as a tool in fractal geometry.  For an iterated function system (IFS) of the form $\{K; f_1, f_2, \ldots f_p\}$, we consider the alphabet $\{1, 2, \ldots, p \}$ and the set of all one-sided infinite words from the alphabet.  Each point in the attractor of the IFS is associated with an infinite word.  In this paper, we will consider subfractals induced by a specific type of subshift; that is, we only consider points in an attractor that are associated with allowable words from a subshifts.\\

Fractal dimension computations for attractors of IFSs exist for several different types of IFSs: self-similar IFSs \cite{Hutchinson, FalconerFG}, non-self-similar attractors of hyperbolic IFSs \cite{EllisBranton}, and recurrent IFSs with hyperbolic maps \cite{Roychowdhury}. In the latter two on this list, the authors consider a subset of the full attractor of the IFS.  
Expanding on this idea, subfractals induced by subshifts are introduced in \cite{Sattler}.  In that article, computations for bounds on the fractal dimensions of a subfractal induced by a subshift are produced by modifying techniques used to compute the entropy of the underlying sofic subshift from \cite{LindMarcus}.  The proofs rely heavily on the existence of an adjacency matrix for the subshift, which is only guaranteed for a sofic subshift.  For non-sofic subshifts, there is no unified approach for entropy calculations.  \\

For an IFS $\{K; f_1, \ldots, f_p\}$ and a sofic subshift $Y$, let $\mathcal{F}_{Y}$ denote the attractor consisting only of points associated with allowable words in $Y$.  Assume for each $1 \leq i \leq p$ there exist constants $0< c_i \leq \bar{c}_i<1$ such that  $f_i$ satisfies $c_i d(x,y) \leq d(f_i(x),f_i(y)) \leq \bar{c}_i d(x,y)$. 
In both \cite{Roychowdhury} and \cite{Sattler}, a topological pressure function of the form $P(t) = \lim_{n \to \infty} \frac{1}{n} \log \left( \sum_{\omega \in \mathcal{L}_n} c_\omega^t \right)$ is used, where $\mathcal{L}_n$ denotes the allowable words of length $n$, and a function $\bar{P}(t)$ is defined similarly using $\bar{c}_i$.  
In \cite{Sattler}, it is shown that Hausdorff and box dimensions of $\mathcal{F}_{Y}$ are bounded by the zeros of $P(t)$ and $\bar{P}(t)$.  \\

Using a similar approach, we consider subfractals induced by $S$-gap shifts in this paper.  An $S$-gap shift is a subshift on the alphabet $\mathcal{A} = \{0,1\}$ defined by a set $S \subseteq \mathbb{N}_0$ where elements in $S$ describe the allowable number of 0s that can separate two 1s.  Although some $S$-gap shifts are sofic, we cannot guarantee the existence of a finite graph presentation to help us understand the structure of an $S$-gap shift.  Instead, we consider a decomposition of the language to establish bounds on the growth rate of finite words.  Techniques for decomposing the language of an $S$-gap shift and finding growth rate bounds are given in \cite{CT}.

\medskip

In the main theorem of this article, we prove that similar bounds for the Hausdorff and box dimension hold for subfractals induced by $S$-gap shifts:

\begin{theorem*}
Let $h$, $H$ be the unique values such that $P(h) = 0 = \bar{P}(H)$.  Then,
\begin{center} $h \leq \dim_H(\mathcal{F}_{X(S)}) \leq H$, 
$h \leq \underline{\dim}_B(\mathcal{F}_{X(S)}) \leq H$, and 
$h \leq \overline{\dim}_B(\mathcal{F}_{X(S)}) \leq H$.
\end{center}
\end{theorem*}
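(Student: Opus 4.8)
The plan is to reduce the six inequalities to two. Using the general relations $\dim_H(\mathcal{F}_{X(S)}) \leq \underline{\dim}_B(\mathcal{F}_{X(S)}) \leq \overline{\dim}_B(\mathcal{F}_{X(S)})$, it suffices to prove the single lower bound $h \leq \dim_H(\mathcal{F}_{X(S)})$ and the single upper bound $\overline{\dim}_B(\mathcal{F}_{X(S)}) \leq H$; the former propagates to the lower bounds for both box dimensions, and the latter to the upper bound for the Hausdorff dimension. Throughout I would treat the established properties of $P$ and $\bar{P}$ as given: each is continuous and strictly decreasing with a unique zero (at $h$ and $H$ respectively), and $\frac{1}{n}\log\sum_{\omega \in \mathcal{L}_n} c_\omega^t \to P(t)$, with the analogous statement for $\bar{c}_\omega$ and $\bar{P}$. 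In particular $h \leq H$, since $\bar{c}_i \geq c_i$ forces $\bar{P} \geq P$.

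For the upper bound I would build a Moran (stopping-time) cover. Fixing $r \in (0,1)$, for each infinite allowable word I stop its prefix $u$ at the first length with $\bar{c}_u \leq r$, and collect these stopping words into a prefix-free set $\Gamma_r$; the cylinders $\{f_u(K) : u \in \Gamma_r\}$ then cover $\mathcal{F}_{X(S)}$ with diameters at most $r\diam(K)$. Since deleting the last symbol of any $u \in \Gamma_r$ gives a word with $\bar{c}$-weight exceeding $r$, every $u \in \Gamma_r$ satisfies $(\min_i \bar{c}_i)\,r < \bar{c}_u \leq r$, which both constrains the lengths of the words in $\Gamma_r$ to an integer interval of cardinality $O(\log(1/r))$ and yields $|\Gamma_r|\,((\min_i \bar{c}_i) r)^H \leq \sum_{u \in \Gamma_r} \bar{c}_u^H$. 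Splitting the right-hand side by word length and invoking $\bar{P}(H) = 0$ (so that $\sum_{\omega \in \mathcal{L}_n} \bar{c}_\omega^H = e^{o(n)}$) bounds it by $r^{-o(1)}$, whence $|\Gamma_r| \leq C\,r^{-H-o(1)}$ and $\overline{\dim}_B(\mathcal{F}_{X(S)}) \leq H$.

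For the lower bound I would use the mass distribution principle, which requires a measure $\mu$ on $\mathcal{F}_{X(S)}$ with $\mu(B(x,r)) \leq C r^h$. I would build $\mu$ from the block decomposition of the language supplied by \cite{CT}: every allowable word is, up to boundary pieces, a concatenation of return blocks $10^s$ with $s \in S$. Assigning the block $10^s$ the weight $p_s := c_1^h c_0^{hs}$ makes $\{p_s\}$ a probability vector, precisely because $P(h)=0$ is equivalent to $c_1^h \sum_{s \in S} c_0^{hs} = 1$; the resulting i.i.d.\ renewal measure on block-concatenations pushes forward to a measure $\mu$ on $\mathcal{F}_{X(S)}$ with $\mu(f_u(K)) = c_u^h$ on block-aligned words $u$ and $\mu(f_u(K)) \asymp c_u^h$ in general. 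Cutting the cylinder tree into the antichain of words with $c_u \asymp r$ and using the separation of the IFS to bound the number of such cylinders meeting any ball $B(x,r)$ then gives $\mu(B(x,r)) \leq C r^h$, so $\dim_H(\mathcal{F}_{X(S)}) \geq h$.

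I expect the lower bound to be the main obstacle. Because an $S$-gap shift need not be sofic, there is no adjacency matrix and hence no transfer-operator construction of a Gibbs measure as in \cite{Sattler}; the renewal measure built from \cite{CT} is the substitute, and making it do the same work forces two technical points. First, when $S$ is infinite the blocks $10^s$ have unbounded length, so a ball of radius $r$ may fall inside a single long run of $0$s, and the comparison between cylinder mass and ball radius must be handled there via the self-similar geometry of $f_0$ alone. Second, the leading and trailing incomplete blocks (and the leading run of $0$s) produce boundary factors that must be shown to be uniformly bounded so that $\mu(f_u(K)) \asymp c_u^h$ survives. Establishing the normalizing identity $c_1^h \sum_{s \in S} c_0^{hs} = 1$ and the estimate $\sum_{\omega \in \mathcal{L}_n} c_\omega^t = e^{nP(t)+o(n)}$ uniformly enough to feed both covering arguments is exactly where the language-decomposition machinery of \cite{CT} is indispensable.
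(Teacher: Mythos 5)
Your proposal has the same overall architecture as the paper (reduce to the two extreme inequalities, mass distribution principle below, a stopping-time cover above), but both halves are executed by genuinely different means. For the upper bound, the paper stops cylinders by diameter and invokes the uniform bound $1 \le \sum_{\omega\in\mathcal{L}_n}\bar{c}_\omega^H \le L_2$ (Corollary \ref{corboundH}, which rests on the language decomposition via Lemma \ref{langboundlemma}), whereas your length-splitting argument needs only the definitional fact $\sum_{\omega\in\mathcal{L}_n}\bar{c}_\omega^H = e^{o(n)}$; you pay a harmless $r^{-o(1)}$ factor but avoid the decomposition machinery entirely on this side, which is a real simplification. For the lower bound, the paper's measure is abstract: a Banach limit of the ratios $\nu_n([\![\omega]\!])$, extended by Kolmogorov and pushed forward, with the Gibbs-type bound $\nu([\![\omega]\!])\le K_2 c_\omega^h$ coming from Corollary \ref{corboundh}. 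Your renewal measure with block weights $p_s = c_1^h c_0^{sh}$ is an explicit substitute: its normalization is exactly the identity $\sum_{s\in S}c_0^{sh}c_1^h=1$, which is the content of Proposition \ref{compprop} together with the proposition that $Q=P$ near $h$, so you are right that this is where the decomposition of \cite{CT} enters. The boundary factors you worry about are uniformly bounded by $(1-c_0^h)^{-1}$, and note that only the one-sided bound $\mu([\![u]\!])\le C c_u^h$ is needed (the two-sided $\asymp$ you assert can fail when $S$ has long gaps, but you never use the lower half). Supporting the measure only on block concatenations is also fine, since a dimension lower bound for a subset suffices. Your construction has the virtue of making the equation $\sum_s c_0^{sh}c_1^h=1$ the engine of the proof, where in the paper it appears only afterwards as a computational corollary.

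The one step I would not sign off on as written is the estimate $\mu(B(x,r))\le Cr^h$ via the antichain of words with $c_u \asymp r$. When the IFS is not self-similar ($c_i < \bar{c}_i$), a cylinder in that antichain can have diameter as large as $\bar{c}_u\lvert\mathcal{K}\rvert \gg r$ (take a long word: $c_u$ shrinks to $r$ while $\bar{c}_u$ stays enormous relative to it), so the OSC volume-counting argument does not bound the number of such cylinders meeting $B(x,r)$: the sets $f_u(V)$ are disjoint and each contains a ball of radius $\asymp r$, but those inner balls need not lie anywhere near $B(x,r)$ when the cylinders themselves are large. This is precisely why the paper stops on diameters, using $\mathcal{U}_r=\{f_\omega(\mathcal{K}): \lvert f_\omega(\mathcal{K})\rvert < r \le \lvert f_{\omega^-}(\mathcal{K})\rvert\}$ and citing from \cite{Sattler} that $B(x,r)$ meets at most $M$ elements of $\mathcal{U}_r$ with $M$ independent of $r$; that cited fact does not transfer to your $c_u$-based antichain. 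The repair is easy and keeps your measure: with diameter stopping one has $c_\omega \le \lvert f_\omega(\mathcal{K})\rvert/\lvert\mathcal{K}\rvert < r/\lvert\mathcal{K}\rvert$, so your Gibbs bound gives $\mu(f_\omega(\mathcal{K}))\le Cc_\omega^h \le Cr^h/\lvert\mathcal{K}\rvert^h$, and the mass distribution principle goes through with the multiplicity constant $M$. With this fix your run-of-zeros concern also dissolves, since it was a symptom of comparing $c_u$ rather than diameters to the radius $r$.
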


Although the assertions are similar, the proofs require a major departure from those in \cite{Roychowdhury, Sattler}.  The novelty in the proof is the introduction of an auxillary function, $Q(t)$, that enables us to incorporate structural properties of $S$-gap shifts in computations involving the topological pressure function.  In section 3, we reveal a clearer connection between the fractal dimension of such an attractor and the entropy of the underlying shift space.  In \cite{Spandl}, it is shown that the entropy of an $S$-gap shift is given by $\log(\lambda^{-1})$, where $\lambda$ satisfies
\[ \sum_{s \in S} \lambda^{s+1} = 1.\]
In this paper, we show that the values of $h$, $H$ for which $P(h) = 0 = \bar{P}({H})$ are the same values for which
\[ \sum_{s\in S} c_0^{sh} c_1^h =1 \text{  and  } \sum_{s \in S} \bar{c}_0^{sH} \bar{c}_1^H = 1.\]
Given a set $S$, this allows us to either compute or estimate $h$ and $H$, depending on the set $S$.   There is no uniform approach to entropy computations of non-sofic shift spaces; these bounds provide a technique for fractal dimension computations for subfractals induced by non-sofic $S$-gap shifts, such as the prime gap shift.
\section{Background and definitions}

Let $\mathcal{A} = \{0,1\}$ and $X = \{\omega_1 \omega_2 \omega_3 \ldots: \omega_i \in \mathcal{A} \text{ for all } i \geq 1\}$ be the compact metric space of all one-sided infinite words from letters in $\mathcal{A}$ equipped with the metric $d_X$ defined by $d_X(\omega, \tau) = \frac{1}{k}$, where $k = \min\{i : \omega_i \neq \tau_i\}$ . We consider the shift space $(X, \sigma)$, where the shift map $\sigma: X \to X$ is defined by $\sigma(\omega_1 \omega_2 \omega_3 \ldots ) = \omega_2 \omega_3 \omega_4 \ldots$. 

\medskip

A \textit{subshift} is a subset of $X$ that is both $\sigma$-invariant and closed.  There are several classes of well-studied subshifts, such as subshifts of finite type (SFTs) and sofic subshifts.  In this paper, we will focus on another class of subshifts called $S$-gap shifts.  An $S$-gap shift is defined using a subset $S \subset \mathbb{N}_0$.  $S$-gap shifts are usually defined as a subset of double-sided infinite words, where consecutive 1s are separated by $m$ 0s, where $m \in S$.  For our purpose of defining subfractals we will use one-sided infinite strings, and so an $S$-gap shift, denoted $X_S$, will be defined as the closure of the set 
\[ \{0^n 1 0^{s_1} 1 0^{s_2}1 \ldots : s_i \in S \text{ for all } i \geq 1\text{ and } n \in \mathbb{N}_0\}. \]
Here, we are assuming that $S$ is infinite; if $S$ is finite, we restrict the first block of 0s with $0 \leq n \leq \max\{s : s \in S\}$.  Sofic shifts include all SFTs; however, many $S$-gap shifts are not sofic.

\medskip

To better understand the structure of a subshift and the construction of a subfractal, we will focus on the allowable finite words in a subshift. Let $\mathcal{L}_n(X)$ denote the collection of all words from $\mathcal{A}$ of length $n$, and $\mathcal{L}(X) = \bigcup_{n=1}^\infty \mathcal{L}_n$. We call $\mathcal{L}(X)$ the \textit{language} of $X$.  Similarly, for a subshift $Y \subset X$, we will use $\mathcal{L}_n(Y)$ to denote the collection of finite words of length $n$ that appear in some infinite string in $Y$ and $\mathcal{L}(Y)$ will denote the language of $Y$.  If the subshift is understood in context, we will just use $\mathcal{L}_n$ and $\mathcal{L}$.  

\medskip

It will be helpful for us to consider special decompositions of a language.  We say a language $\mathcal{L}$ admits a decomposition of the form $\mathcal{L} = \mathcal{C}^p \mathcal{G} \mathcal{C}^s$ if every $\omega \in \mathcal{L}$ can be written as $\omega = \alpha_1 \gamma \alpha_2$ where $\alpha_1 \in \mathcal{C}^p$, $\gamma \in \mathcal{G}$, and $\alpha_2 \in \mathcal{C}^s$.  We can think of $\mathcal{C}^p$ and $\mathcal{C}^s$ as prefix and suffix sets respectively, and  $\mathcal{G}$ as a core set.  

\medskip

Let $\mathcal{K} \subset \mathbb{R}^n$ be a compact set and let $\{\mathcal{K}; f_0, f_1\}$ denote an iterated function system (IFS) with $f_i \colon \mathcal{K} \to \mathcal{K}$ for $i=0,1$.  We will assume that the IFS is \textit{hyperbolic}, meaning that there exist $0 < c_i \leq \bar{c_i} < 1$ such that
\[ c_i d(x,y) \leq d(f_i(x),f_i(y)) \leq \overline{c_i}d(x,y)\]
for all $x,y \in \mathcal{K}$ and for $i=0,1$.  Let $\mathcal{F}$ denote the attractor of this hyperbolic IFS.  Given this IFS, we consider the alphabet $\mathcal{A} = \{0,1\}$, where $0$ corresponds to $f_0$ and $1$ to $f_1$.  For a finite word $\omega  \in \mathcal{L}_n$, we adopt the following notations:
\begin{align*}
f_\omega &= f_{\omega_n} \circ f_{\omega_{n-1}} \circ \cdots f_{\omega_1} \\ 
c_\omega &= c_{\omega_1} c_{\omega_2} \cdots c_{\omega_n} \\
\bar{c}_\omega &= \bar{c} _{\omega_1} \bar{c}_{\omega_2} \cdots \bar{c}_{\omega_n} 
\end{align*}

Define the associated coding map $\pi \colon X \to \mathcal{F}$ by $\pi (\omega) = \lim_{n \to \infty} f_{\omega \vert_n} ( \mathcal{K})$, where $\omega \vert_n = \omega_1 \omega_2 \ldots \omega_n$.  For any IFS of the form $\{\mathcal{K} ; f_1 ,\ldots f_p\}$ and a subshift $Y \subset X = \mathcal{A}^\mathbb{N}$, we define a subfractal of $\mathcal{F}$ by only considering points associated with a word from $Y$.  More formally, we define $\mathcal{F}_{Y} = \{ \pi(\omega) : \omega \in Y\}$.  We will assume the IFS satisfies the \textit{open set condition (OSC)}, which means that there exists an open set $V \subset \mathcal{K}$ such that $\cup_{i=1}^p f_i(V) \subset V$, where the union is disjoint.  \\

\medskip

The main result of this paper involves Hausdorff and box dimensions, which we define here.  Let $\mathcal{K} \subset \mathbb{R}^n$ be a compact set, $E \subset \mathcal{K}$, and $\mathcal{O}$ be the collection of all open $\varepsilon$-covers of $E$. 
 For $s \geq 0$, if $\overline{\mathcal{H}}_\varepsilon^s(E) = \inf_{\mathcal{U} \in \mathcal{O}} \sum_{U \in \mathcal{U}} (\text{diam}(U))^s$ then the $s$-dimensional Hausdorff outer measure is defined by $\mathcal{H}^s(E)   = \lim_{\varepsilon \to 0} \overline{\mathcal{H}}^s_\varepsilon (E)$.  
 We restrict the outer measure to $\overline{\mathcal{H}}^s$-measurable sets to define the $s$-dimensional Hausdorff measure, denoted $\mathcal{H}^s$.  
 The \textit{Hausdorff dimension} of $E$, denoted $\dim_H(E)$ is the unique value of $s$ such that 
\[ \mathcal{H}^r(E) = \begin{cases} 0, &r > s \\
\infty, &r<s
\end{cases}
\]

\medskip

Let $N_r(E)$ be the smallest number of sets of diameter $r$ that can cover $E$.  The \textit{lower and upper box dimensions} of $E$ are defined, respectively, as:
\[ \underline{\dim}_B(E) = \liminf_{r \to 0} \frac{ \log N_r(E)}{-\log r} \text{  and  } \overline{\dim}_B(E) = \limsup_{r \to 0} \frac{\log N_r(E)}{-\log r}\]

The following relationship between the these fractal dimensions are well-known \cite{FalconerFG}:
\[ \dim_H(E) \leq \underline{\dim}_B(E) \leq \overline{\dim}_B(E)\]

\section{Topological pressure function for subshifts}

Topological pressure functions are sometimes considered to be generalizations of topological entropy.  For subshifts, the topological entropy is given by 
\[h(Y) = \lim_{n \to \infty} \frac{1}{n} \log ( \# \mathcal{L}_n(Y)). \]
Computations for the topological entropy of SFTs, sofic subshifts, and $S$-gap shifts are known \cite{LindMarcus,Spandl}.  Following \cite{Roychowdhury, Sattler} we define the following topological pressure function.

\begin{definition}
Let $Y$ be a subshift.  The \textit{lower topological pressure function} of $\mathcal{F}_{Y}$ is given by $P(t) = \lim_{n \to \infty} \frac{1}{n} \log \left( \sum_{\omega \in \mathcal{L}_n(Y)} c_\omega^t\right)$.  Similarly, the \textit{upper topological pressure} is defined by $\bar{P}(t) = \lim_{n \to \infty} \frac{1}{n} \log \left( \sum_{\omega \in \mathcal{L}_n(Y)} \bar{c}_\omega^t\right)$.

\end{definition}

In \cite{Sattler}, it is shown that both topological pressure functions are strictly decreasing, convex, and continuous on $\mathbb{R}$, and there are unique values $h,H \in [0, \infty)$ such that $P(h) = 0 = \bar{P}(H)$ where $h \leq H$.  Determining the zeros of a topological pressure function is a difficult process in general; it will be helpful for us to consider other closely related functions that will make it easier for us to find values of $h$ and $H$.  

\medskip

To define these new functions, we will utilize a suitable decomposition of the language of an $S$-gap shift, which is used in \cite{CT}.  We define a core set, prefix set, and suffix set as follows:
\begin{align*}
\mathcal{G} &= \{0^{s_1}10^{s_2}1\ldots0^{s_k}1: s_i \in S \text{ for } 1 \leq i \leq k\} \\
\mathcal{C}^p &= \{0^n1 : n \notin S\} \\
\mathcal{C}^s &= \{0^n : n \in \mathbb{N}_0\}
\end{align*}
Given an $S$-gap shift $X(S)$, notice that every word in $\mathcal{L}(X(S))$ can be written in the form $0^m \omega 0^n$ where $m,n \in \mathbb{N}_0$ and $\omega \in \mathcal{G}$, and so we can write $\mathcal{L}(X(S)) = \mathcal{C}^p \mathcal{G} \mathcal{C}^s$.  Similar to the notation for languages, we will use $\mathcal{G}_n$, $\mathcal{C}^p_n$ and $\mathcal{C}^s_n$ to denote the subsets consisting of words of length $n$.  Note that we have assumed $S$ is infinite; if $S$ is finite, then we restrict $n$ in both $\mathcal{C}^p$ and $\mathcal{C}^s$ with $n \leq \max (S)$.

\medskip

Using the core set $\mathcal{G}$ defined above, we define two functions:
\[ 
Q(t) = \limsup_{n \to \infty} \frac{1}{n} \log \left( \sum_{\omega \in \mathcal{G}_n} c_\omega^t\right) \text{  and  }
\bar{Q}(t) = \limsup_{n \to \infty} \frac{1}{n} \log \left( \sum_{\omega \in \mathcal{G}_n} \bar{c}_\omega^t\right)
\]
Our goal is to show that $P(t) = Q(t)$ and $\bar{P}(t) = \bar{Q}(t)$ for $t$ values near the zeros of these functions.  We begin by establishing bounds on $\sum_{\omega \in \mathcal{L}_n} c_\omega^t$ and $\sum_{\omega \in \mathcal{G}_n} c_\omega^t$.  The techniques are modified from the ideas used to establish bounds on the growth rates of $\mathcal{L}_n$ and $\mathcal{G}_n$ in \cite{CT}.  All proofs will be presented using $P(t)$ and $Q(t)$ with constants $c_i$, and proofs for the same results using $\bar{Q}(t)$ and $\bar{P}(t)$ with constants $\bar{c}_i$ are identical.

\begin{lemma}
\label{coreupperlemma} 
For any $n \in \mathbb{N}$,
\[\sum_{\omega \in \mathcal{G}_n} c_\omega^t\leq e^{n P(t)}. \]
\end{lemma}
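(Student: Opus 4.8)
The plan is to exploit the fact that the core set $\mathcal{G}$ is closed under concatenation, which makes the core sum supermultiplicative, and then to convert this into the desired per-$n$ bound by a power trick. Write $b_n = \sum_{\omega\in\mathcal{G}_n} c_\omega^t$ and $a_n = \sum_{\omega\in\mathcal{L}_n} c_\omega^t$, so that $P(t)=\lim_{n\to\infty}\frac1n\log a_n$; note both sums are finite and positive. The claim is equivalent to $b_n\le e^{nP(t)}$.

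First I would record the structural input. If $\alpha\in\mathcal{G}_m$ and $\beta\in\mathcal{G}_n$, then $\alpha\beta$ again has the form $0^{s_1}1\cdots 0^{s_r}1$ with every exponent in $S$, so $\alpha\beta\in\mathcal{G}_{m+n}$; moreover the concatenation map $\mathcal{G}_m\times\mathcal{G}_n\to\mathcal{G}_{m+n}$ is injective because $\alpha$ is recovered as the first $m$ symbols of $\alpha\beta$. Combining this with the multiplicativity $c_{\alpha\beta}=c_\alpha c_\beta$ gives
\[ b_m b_n=\sum_{\alpha\in\mathcal{G}_m}\sum_{\beta\in\mathcal{G}_n} c_\alpha^t c_\beta^t=\sum_{(\alpha,\beta)} c_{\alpha\beta}^t\le\sum_{\gamma\in\mathcal{G}_{m+n}} c_\gamma^t=b_{m+n},\]
i.e. the sequence $b_n$ is supermultiplicative. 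Iterating yields $b_n^{\,k}\le b_{nk}$ for every $k\in\mathbb{N}$.

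Next I would pass from the core to the full language. Since every core word is itself allowable, $\mathcal{G}_{nk}\subseteq\mathcal{L}_{nk}$, and as all summands are positive this gives $b_{nk}\le a_{nk}$. Chaining the two inequalities produces $b_n^{\,k}\le a_{nk}$, hence $b_n\le a_{nk}^{1/k}$ for all $k$. Finally I would let $k\to\infty$ with $n$ fixed: writing $a_{nk}^{1/k}=\exp\!\big(n\cdot\tfrac{1}{nk}\log a_{nk}\big)$ and using that $\tfrac{1}{m}\log a_m\to P(t)$ (so the subsequence $m=nk$ converges as well), we obtain $a_{nk}^{1/k}\to e^{nP(t)}$. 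Since $b_n$ is bounded above by every term of a sequence converging to $e^{nP(t)}$, we conclude $b_n\le e^{nP(t)}$, which is the assertion.

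The step I expect to be the main obstacle is the passage from the asymptotic statement about $P(t)$ to a bound valid for each individual $n$: the naive inclusion $\mathcal{G}_n\subseteq\mathcal{L}_n$ only yields $b_n\le a_n$, and $a_n$ need not satisfy $a_n\le e^{nP(t)}$ for small $n$. The power trick $b_n^{\,k}\le a_{nk}$ is what upgrades the limit into a clean per-$n$ inequality; verifying the closure and injectivity of concatenation on $\mathcal{G}$ (so that supermultiplicativity holds with no overcounting) is the other point requiring care. The proof using $\bar c_i$, $\bar Q$, $\bar P$ is identical.
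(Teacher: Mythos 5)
Your proof is correct and is essentially the paper's own argument: both hinge on the inclusion $(\mathcal{G}_n)^k \subseteq \mathcal{L}_{nk}$ (which you derive via supermultiplicativity of the core sums and $\mathcal{G}_{nk} \subseteq \mathcal{L}_{nk}$), yielding $b_n^k \le a_{nk}$, and then take $k$-th roots and let $k \to \infty$ using the definition of $P(t)$. The only difference is that you make explicit the closure of $\mathcal{G}$ under concatenation, the injectivity of the concatenation map, and the multiplicativity $c_{\alpha\beta} = c_\alpha c_\beta$, details the paper summarizes as ``some algebra.''
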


\begin{proof}
First, we notice that $(\mathcal{G}_n)^k \subset \mathcal{L}_{nk}$.  Using this and some algebra, we obtain
\newline $\frac{1}{nk} \log \left(\sum_{\omega \in \mathcal{G}_n} c_\omega^t \right)^k \leq \frac{1}{nk} \log \left( \sum_{\omega \in \mathcal{L}_{nk}} c_\omega^t \right)$.  This implies that
\[\frac{1}{n} \log \left( \sum_{\omega \in \mathcal{G}_n} c_\omega^t \right)  \leq \lim_{k \to \infty} \frac{1}{nk} \log \left( \sum_{\omega \in \mathcal{L}_{nk}} c_\omega^t \right) = P(t),\]
and so $\sum_{\omega \in \mathcal{G}_n} c_\omega^t \leq e^{n P(t)}$.
\end{proof}

 \begin{lemma}
 \label{langboundlemma}
 For any $n \in \mathbb{N}$ and $t$ in a neighborhood of $h$,
 \[ e^{n P(t) } \leq \sum_{\omega \in \mathcal{L}_n} c_\omega^t \leq K_1 e^{n P(t)},\]
 where $K_1$ is a constant.
 \end{lemma}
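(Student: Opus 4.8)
The plan is to prove the two inequalities separately. The lower bound uses only that the language of any subshift is closed under passing to subwords, so that $a_n := \sum_{\omega \in \mathcal{L}_n} c_\omega^t$ is submultiplicative; the $S$-gap structure enters only in the upper bound, through the decomposition $\mathcal{L} = \mathcal{C}^p \mathcal{G} \mathcal{C}^s$ and Lemma~\ref{coreupperlemma}.

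For the lower bound, I would observe that every $\omega \in \mathcal{L}_{m+n}$ factors as $\omega = uv$ with $u \in \mathcal{L}_m$, $v \in \mathcal{L}_n$ (subwords of allowable words are allowable) and $c_\omega = c_u c_v$. Summing over $\mathcal{L}_{m+n}$ and enlarging the index set to all of $\mathcal{L}_m \times \mathcal{L}_n$ gives $a_{m+n} \le a_m a_n$. Fekete's lemma applied to $\log a_n$ then gives $P(t) = \inf_n \tfrac1n \log a_n$, so $\tfrac1n \log a_n \ge P(t)$ for every $n$, which is the desired inequality $e^{nP(t)} \le a_n$.

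For the upper bound, I would use that every $\omega \in \mathcal{L}_n$ can be written, allowing empty pieces, as $\alpha_1 \gamma \alpha_2$ with $\alpha_1 \in \mathcal{C}^p$, $\gamma \in \mathcal{G}$, $\alpha_2 \in \mathcal{C}^s$; the resulting concatenation map from length-$n$ triples onto $\mathcal{L}_n$ is surjective, and $c_\omega = c_{\alpha_1} c_\gamma c_{\alpha_2}$. Over-counting (all terms are positive) therefore yields
\[
a_n \le \sum_{i+j+k=n} \Big(\sum_{\alpha_1 \in \mathcal{C}^p_i} c_{\alpha_1}^t\Big)\Big(\sum_{\gamma \in \mathcal{G}_j} c_\gamma^t\Big)\Big(\sum_{\alpha_2 \in \mathcal{C}^s_k} c_{\alpha_2}^t\Big).
\]
The suffix sum equals $c_0^{kt}$, the prefix sum is $1$ when $i=0$ and at most $c_0^{(i-1)t}c_1^t$ otherwise, and Lemma~\ref{coreupperlemma} bounds the core sum by $e^{jP(t)}$. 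Factoring $e^{nP(t)} = e^{(i+j+k)P(t)}$ out of each summand leaves
\[
a_n \le e^{nP(t)}\Big(1 + \sum_{i \ge 1} c_1^t c_0^{(i-1)t} e^{-iP(t)}\Big)\Big(\sum_{k \ge 0} c_0^{kt} e^{-kP(t)}\Big),
\]
and I would take $K_1$ to be the product of the two parenthesized factors.

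The main obstacle, and the reason for restricting to a neighborhood of $h$, is verifying that these two series converge to a finite constant independent of $n$. Both are geometric in the ratio $c_0^t e^{-P(t)}$, so they converge exactly when $c_0^t e^{-P(t)} < 1$, i.e. when $P(t) > t \log c_0$. At $t = h$ this reduces to $c_0^h < 1$, which holds since $0 < c_0 < 1$ and $h > 0$; continuity of $P$ extends the strict inequality to a neighborhood of $h$, making $K_1$ finite there. The remaining care is bookkeeping: handling the empty prefix, core, and suffix (the $i$, $j$, or $k$ equal to $0$ terms) so that words with no $1$ or with leading gap in $S$ are covered, and noting that $K_1$ may depend on $t$ but not on $n$, which is all the statement requires.
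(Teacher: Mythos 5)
Your proof is correct and takes essentially the same approach as the paper: the lower bound via submultiplicativity of $a_n=\sum_{\omega\in\mathcal{L}_n}c_\omega^t$ (the paper uses $\mathcal{L}_{nk}\subset(\mathcal{L}_n)^k$ and lets $k\to\infty$ rather than invoking Fekete), and the upper bound via the decomposition $\mathcal{L}=\mathcal{C}^p\mathcal{G}\mathcal{C}^s$ together with Lemma~\ref{coreupperlemma} and the key fact that $c_0^t e^{-P(t)}<1$ for $t$ near $h$, which the paper phrases as $\log(c_0^t)\le P(t)-\varepsilon$ before bounding the leftover sum by $\sum_{m\ge 0}(m+1)e^{-m\varepsilon}$. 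Your geometric-series bookkeeping in place of the paper's double sum is a cosmetic difference only.
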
 

\begin{proof}
First, using the fact that $\mathcal{L}_{nk} \subset (\mathcal{L}_n)^k$, we obtain
\[ \frac{1}{nk} \log \left ( \sum_{\omega \in \mathcal{L}_{nk}} c_\omega^t \right) \leq \frac{1}{nk} \log \left( \sum_{\omega \in \mathcal{L}_n} c_\omega^t \right)^k = \frac{1}{n} \log \left ( \sum_{\omega \in \mathcal{L}_n} c_\omega^t\right).\]
Letting $k \to \infty$, we obtain $P(t) \leq \frac{1}{n}  \log \left ( \sum_{\omega \in \mathcal{L}_n} c_\omega^t\right)$, and so
$e^{nP(t)} \leq \sum_{\omega \in \mathcal{L}_n} c_\omega^t$.

\medskip

To find an upper bound, we will work with the language decomposition of $\mathcal{L}$.  Recall that $\mathcal{C}_n^p = \{0^{n-1}1\}$ or $\emptyset$ and $\mathcal{C}_n^s = \{0^n\}$.  Therefore, $\sum_{\omega \in \mathcal{C}^p_n} c_\omega^t = c_0^{(n-1)t}c_1^{t}$ when $n-1 \notin S$ and $\sum_{\omega \in \mathcal{C}^s_n} c_\omega^t = c_0^{nt}$.  \\

Next, recall that $P(t)$ is a continuous function and $P(h) = 0$, so there exists a neighborhood $N_\alpha(h)$ such that  $\log(c_0^t) < P(t)$ and $\log(c_1^t) < P(t)$ for $t \in N_\alpha(h)$.  Therefore, we can find $\varepsilon>0$ such that both $\log(c_0^t) \leq P(t) - \varepsilon$ and $\log(c_1^t) \leq P(t) - \varepsilon$.  This implies that $c_0^{tn} \leq e^{n(P(t)-\varepsilon)}$ and $c_1^{t} \leq e^{P(t)-\varepsilon}$.

\medskip

Now, we use this fact with Lemma \ref{coreupperlemma} to obtain 
\begin{align*}
\sum_{\omega \in \mathcal{L}_n} c_\omega^t &\leq \sum_{i+j+k=n} \left( \sum_{\omega \in \mathcal{C}_i^p} c_\omega^t \right)  \left( \sum_{ \tau \in \mathcal{G}_j} c_\tau^t \right) \left( \sum_{\gamma \in \mathcal{C}_k^s} c_\gamma^t \right) \\
&\leq \sum_{i+j+k=n} c_0^{(i+k-1)t}c_1^t e^{jP(t)} \\
&\leq \sum_{i+j+k=n} e^{(i+k)(P(t)- \varepsilon)} e^{jP(t)} \\
&= e^{n P(t)} \sum_{i+j+k=n} e^{-\varepsilon(i+k)} \\
&= e^{nP(t)} \sum_{m=0}^n \sum_{i=0}^m e^{-m \varepsilon}\\
& \leq e^{nP(t)} \sum_{m=0}^\infty (m+1) e^{-m \varepsilon}
\end{align*}
Since $\sum_{m=0}^\infty (m+1) e^{-m \varepsilon}$ converges, we can find $K_1$ such that $\sum_{\omega \in \mathcal{L}_n} c_\omega^t \leq K_1 e^{nP(t)}$.
\end{proof}

\begin{corollary}
\label{corboundh}
Let $h$ be the unique value such that $P(h) = 0$.  Then, there exists constant $K_2$ such that 
\[1 \leq \sum_{\omega \in \mathcal{L}_n} c_\omega^h \leq K_2.\]

\end{corollary}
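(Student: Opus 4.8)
The plan is to obtain the corollary as an immediate specialization of Lemma \ref{langboundlemma} at the point $t = h$. That lemma already establishes the two-sided estimate
\[ e^{nP(t)} \leq \sum_{\omega \in \mathcal{L}_n} c_\omega^t \leq K_1 e^{nP(t)} \]
for every $n \in \mathbb{N}$ and every $t$ in a neighborhood of $h$. Since $h$ trivially lies in any neighborhood of itself, I may apply the inequality at $t = h$ directly, so no new combinatorial or analytic work is needed beyond reading off the consequence of the defining property of $h$.

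The key observation is that $P(h) = 0$ by the definition of $h$, which forces the exponential prefactor to trivialize: $e^{nP(h)} = e^{0} = 1$ for every $n$. Substituting $t = h$ into the chain above therefore collapses both bounds onto constants independent of $n$, yielding
\[ 1 \leq \sum_{\omega \in \mathcal{L}_n} c_\omega^h \leq K_1. \]
Setting $K_2 = K_1$ gives exactly the claimed estimate, completing the argument.

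There is no real obstacle here; the corollary is a direct reading of Lemma \ref{langboundlemma} under the normalization $P(h) = 0$, and all the substantive effort (in particular the upper bound via the decomposition $\mathcal{L} = \mathcal{C}^p \mathcal{G} \mathcal{C}^s$ and the geometric summation) has already been carried out in the proof of that lemma. The only point worth stating explicitly is that the neighborhood hypothesis of the lemma is satisfied at the center point $t = h$, which is automatic.
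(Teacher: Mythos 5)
Your proof is correct and matches the paper's intent exactly: the corollary is stated without proof precisely because it is the specialization of Lemma \ref{langboundlemma} at $t = h$, where $P(h) = 0$ collapses the bounds $e^{nP(t)}$ and $K_1 e^{nP(t)}$ to $1$ and $K_1$, respectively. Taking $K_2 = K_1$ is all that is needed, as you say.
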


\begin{corollary}
\label{corboundH}
Let $H$ be the unique value such that $\bar{P}(H) = 0$.  Then, there exists constant $L_2$ such that 
\[1 \leq \sum_{\omega \in \mathcal{L}_n} \bar{c}_\omega^H \leq L_2.\]

\end{corollary}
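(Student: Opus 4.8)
The plan is to obtain this as an immediate specialization of the barred analogue of Lemma \ref{langboundlemma}, exactly mirroring how Corollary \ref{corboundh} follows from Lemma \ref{langboundlemma} itself. As the paper observes, each bounding result proved with $c_i$, $P(t)$, $Q(t)$ transfers verbatim when these are replaced throughout by $\bar{c}_i$, $\bar{P}(t)$, $\bar{Q}(t)$. In particular, the barred version of Lemma \ref{langboundlemma} asserts that there is a constant $L_1$ and a neighborhood of $H$ such that, for every $n \in \mathbb{N}$ and every $t$ in that neighborhood,
\[ e^{n\bar{P}(t)} \leq \sum_{\omega \in \mathcal{L}_n} \bar{c}_\omega^t \leq L_1\, e^{n\bar{P}(t)}, \]
with $L_1$ independent of $n$. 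This is the only ingredient needed.

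To finish, I would evaluate this chain at the distinguished point $t = H$. Since the barred lemma holds on a neighborhood of $H$, and that neighborhood of course contains $H$ itself, the inequalities are valid at $t = H$. Invoking the defining property $\bar{P}(H) = 0$ causes the common exponential factor to collapse, since $e^{n\bar{P}(H)} = e^{0} = 1$ for every $n$. Substituting this into both ends gives
\[ 1 = e^{n\bar{P}(H)} \leq \sum_{\omega \in \mathcal{L}_n} \bar{c}_\omega^H \leq L_1\, e^{n\bar{P}(H)} = L_1, \]
so setting $L_2 = L_1$ yields the stated bounds $1 \leq \sum_{\omega \in \mathcal{L}_n} \bar{c}_\omega^H \leq L_2$ uniformly in $n$.

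I do not anticipate any real obstacle at the level of this corollary: the substantive work is already carried out in establishing the barred analogue of Lemma \ref{langboundlemma}, where the continuity of $\bar{P}$ together with the strict inequalities $\log(\bar{c}_0^t) < \bar{P}(t)$ and $\log(\bar{c}_1^t) < \bar{P}(t)$ near $H$ are used to absorb the prefix and suffix contributions of the decomposition $\mathcal{L} = \mathcal{C}^p \mathcal{G} \mathcal{C}^s$ into a convergent geometric sum. The only point requiring a moment's care is verifying that the evaluation point $H$ genuinely lies in the neighborhood on which that lemma is valid, which is immediate because the lemma is stated precisely on a neighborhood of $H$. Everything else reduces to specializing $t$ to $H$ and applying $\bar{P}(H) = 0$.
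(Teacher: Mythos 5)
Your proposal is correct and matches the paper's (implicit) argument exactly: the paper offers no separate proof of Corollary \ref{corboundH}, instead relying on its stated convention that every result transfers verbatim under the substitutions $c_i \mapsto \bar{c}_i$, $P \mapsto \bar{P}$, $Q \mapsto \bar{Q}$, so the corollary is precisely the barred Lemma \ref{langboundlemma} specialized at $t = H$ with $e^{n\bar{P}(H)} = 1$, just as you describe. Your care in checking that $H$ lies in the neighborhood where the barred lemma holds is appropriate and unproblematic, since the barred lemma is stated on a neighborhood of $H$ (the zero of $\bar{P}$) rather than of $h$.
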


 \begin{lemma}
 \label{corelowerbound}
 There exist constants $K_3>0$ and $N \in \mathbb{N}$ such that for every $n \in \mathbb{N}$, there exists $l$ satisfying $n-N \leq l \leq n$ such that 
 \[ K_3e^{lP(t)} \leq \sum_{\omega \in \mathcal{G}_l} c_\omega^t\]
 for $t$ in a neighborhood of $h$.
 \end{lemma}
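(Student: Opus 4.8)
The plan is to obtain the lower bound on the core sums by playing the global lower bound $e^{nP(t)}\le\sum_{\omega\in\mathcal L_n}c_\omega^t$ from Lemma \ref{langboundlemma} against the decomposition-based upper bound, and then arguing by contradiction: if every core sum in a window $[n-N,n]$ were too small, the prefix and suffix factors could not make up the deficit, since near $h$ they decay strictly faster than $e^{mP(t)}$. Writing $G_j:=\sum_{\tau\in\mathcal G_j}c_\tau^t$ and grouping the decomposition $\mathcal L_n=\mathcal C^p\mathcal G\mathcal C^s$ by the core length $j$, set $A_m:=\sum_{i+k=m}\bigl(\sum_{\alpha\in\mathcal C^p_i}c_\alpha^t\bigr)\bigl(\sum_{\gamma\in\mathcal C^s_k}c_\gamma^t\bigr)$ for the total boundary weight. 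Since every word of $\mathcal L_n$ admits at least one such decomposition, multiplicativity of $c_\omega$ gives
\[
e^{nP(t)}\ \le\ \sum_{\omega\in\mathcal L_n}c_\omega^t\ \le\ \sum_{j=0}^{n}G_j\,A_{n-j}.
\]

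The key quantitative input is the strict gap recorded in the proof of Lemma \ref{langboundlemma}: for $t$ in a neighborhood of $h$ there is $\varepsilon>0$ with $c_0^t\le e^{P(t)-\varepsilon}$ and $c_1^t\le e^{P(t)-\varepsilon}$. Because $\sum_{\alpha\in\mathcal C^p_i}c_\alpha^t=c_0^{(i-1)t}c_1^t$ (or $0$) and $\sum_{\gamma\in\mathcal C^s_k}c_\gamma^t=c_0^{kt}$, every boundary factor of total length $m$ is at most $e^{m(P(t)-\varepsilon)}$, and counting the at most $m+1$ pairs $(i,k)$ with $i+k=m$ yields $A_m\le(m+1)e^{m(P(t)-\varepsilon)}$. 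In particular $C:=\sum_{m=0}^{\infty}(m+1)e^{-m\varepsilon}<\infty$, and its tail $C(N):=\sum_{m=N+1}^{\infty}(m+1)e^{-m\varepsilon}$ tends to $0$ as $N\to\infty$.

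Next I would argue by contradiction. Fix $n$ and suppose $G_j<K_3e^{jP(t)}$ for every $j\in[n-N,n]$. Splitting the sum over $j$ at the window boundary, I bound the near cores ($j\ge n-N$, boundary length $m=n-j\le N$) using this hypothesis and $A_m\le(m+1)e^{m(P(t)-\varepsilon)}$, and the far cores ($j<n-N$) using Lemma \ref{coreupperlemma} in the form $G_j\le e^{jP(t)}$ together with the same estimate; both sums factor as $e^{nP(t)}$ times a geometric series in $e^{-m\varepsilon}$, giving
\[
e^{nP(t)}\ \le\ K_3\,C\,e^{nP(t)}+C(N)\,e^{nP(t)},\qquad\text{so}\qquad 1\le K_3C+C(N).
\]
Choosing $N$ so large that $C(N)\le\tfrac12$ and then $K_3<\tfrac{1}{2C}$ forces $K_3C+C(N)<1$, contradicting the displayed inequality. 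Hence for these fixed constants the hypothesis must fail, so some $l\in[n-N,n]$ satisfies $K_3e^{lP(t)}\le\sum_{\omega\in\mathcal G_l}c_\omega^t$, which is the claim.

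I expect the main obstacle to be conceptual rather than computational: the decomposition only yields an \emph{upper} bound on $\sum_{\omega\in\mathcal L_n}c_\omega^t$, since distinct triples $(\alpha,\tau,\gamma)$ may recombine to the same word, so one cannot simply read off a large single core. The real content is converting the \emph{independent} lower bound $e^{nP(t)}\le\sum_{\omega\in\mathcal L_n}c_\omega^t$ into information about one fixed $\mathcal G_l$. The strict exponential gap $\varepsilon$ near $h$ is exactly what makes this possible, as it renders the boundary weight geometrically negligible and confines the essential mass to cores whose length lies within a bounded distance $N$ of $n$; the one point requiring care is to choose $N$ (to control the tail $C(N)$) before choosing $K_3$, so that $N$ and $K_3$ are uniform in $n$.
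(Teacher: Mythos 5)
Your proof is correct and follows essentially the same route as the paper: the same decomposition $\mathcal{L} = \mathcal{C}^p\mathcal{G}\mathcal{C}^s$, the same $\varepsilon$-gap for the prefix/suffix weights, Lemma \ref{coreupperlemma} to control the far cores, and the choice of $N$ to kill the tail before fixing the constant. The only difference is cosmetic: you phrase the final pigeonhole step as a proof by contradiction (all $G_j$ in the window small $\Rightarrow$ $1 \le K_3C + C(N)$), whereas the paper rearranges directly to show $M\sum_{m=0}^{N-1} a_{n-m}$ exceeds a positive constant, which is the contrapositive of the same argument.
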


\begin{proof}
By Lemma \ref{langboundlemma}, there exists a constant $K_1$ with $\sum_{\omega \in \mathcal{L}_n} c_\omega^t \leq K_1 e^{n P(t)}$.  For $j \in \mathbb{N}$, let $a_j = e^{-jP(t)} \sum_{\tau \in \mathcal{G}_j} c_\tau^t$.  Choose $\varepsilon>0$ as in the proof of Lemma \ref{langboundlemma} and following a similar approach, we find that 
\begin{align*}
e^{nP(t)} \leq \sum_{\omega \in \mathcal{L}_n} c_\omega^t &\leq \sum_{i+j+k=n} \left( \sum_{ \omega \in \mathcal{C}^p_i} c_\omega^t \right) \left(  \sum_{\tau \in \mathcal{G}_j} c_\tau^t \right) \left( \sum_{\gamma \in \mathcal{C}^s_k} c_\gamma^t  \right) \\
&\leq \sum_{i+j+k=n} e^{(i+k)(P(t) - \varepsilon)} \sum_{\tau \in \mathcal{G}_j} c_\tau^t
\end{align*}
By dividing both sides by $e^{nP(t)}$, we get 
\begin{align*}
1 &\leq \sum_{i+j+k=n} e^{-jp(t)} e^{(i+k)(-\varepsilon)} \sum_{\tau \in \mathcal{G}_j} c_\tau^t \\
&= \sum_{i+j+k=n} e^{(i+k)(-\varepsilon)}a_j \\
&\leq \sum_{m=0}^n (m+1)e^{-m\varepsilon}a_{n-m}\\
\end{align*}
Choose $N$ large enough such that $1-\sum_{m=N}^\infty (m+1)e^{-m\varepsilon} > 0$, and let $M = \max \{(m+1)e^{-m\varepsilon} : m \in \mathbb{N}\}$.  Using the inequalities above and Lemma \ref{coreupperlemma}, we get
\[ 1 \leq \sum_{m=0}^{N-1} Ma_{n-m} + \sum_{m=N}^\infty (m+1)e^{-m\varepsilon}.\]
By rearranging, we obtain:
\[ M \sum_{m=0}^{N-1} a_{n-m} \geq 1- \sum_{m=N}^\infty (m+1)e^{-m \varepsilon}>0.\]
This implies that there exists an $l$ with $n-N \leq l \leq n$ satisfying \newline $N M a_l \geq 1-\sum_{m=M}^\infty (m+1)e^{-m\varepsilon}$, and so
\[
\sum_{\omega \in \mathcal{G}_l} c_\omega^t \geq \frac{1-\sum_{m=N}^\infty e^{-m\varepsilon}}{MN} e^{lP(t)}.
\]
\end{proof} 

\begin{proposition}
For $t$ in a neighborhood of $h$, $Q(t) = P(t)$.
\end{proposition}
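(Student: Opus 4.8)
The plan is to prove the two inequalities $Q(t) \leq P(t)$ and $Q(t) \geq P(t)$ separately, each one being essentially an immediate consequence of one of the preceding lemmas. Throughout I would fix $t$ in the common neighborhood of $h$ on which both Lemma~\ref{langboundlemma} and Lemma~\ref{corelowerbound} are valid.

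For the upper bound, Lemma~\ref{coreupperlemma} supplies $\sum_{\omega \in \mathcal{G}_n} c_\omega^t \leq e^{nP(t)}$ for every $n \in \mathbb{N}$. Taking logarithms and dividing by $n$ gives $\frac{1}{n}\log\left(\sum_{\omega \in \mathcal{G}_n} c_\omega^t\right) \leq P(t)$, and passing to the limit superior over $n$ yields $Q(t) \leq P(t)$ directly.

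For the lower bound, I would invoke Lemma~\ref{corelowerbound}, which provides constants $K_3 > 0$ and $N \in \mathbb{N}$ such that for every $n$ there is an index $l = l(n)$ with $n - N \leq l \leq n$ and $\sum_{\omega \in \mathcal{G}_l} c_\omega^t \geq K_3 e^{lP(t)}$. Taking logarithms and dividing by $l$ gives
\[
\frac{1}{l}\log\left(\sum_{\omega \in \mathcal{G}_l} c_\omega^t\right) \geq P(t) + \frac{1}{l}\log K_3.
\]
Since $l(n) \geq n - N \to \infty$, the indices $l(n)$ range over an unbounded subset of $\mathbb{N}$, and along this sequence the right-hand side converges to $P(t)$ because $\frac{1}{l}\log K_3 \to 0$. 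As $Q(t)$ is defined as a limit superior, it is at least the limit superior along any subsequence of indices, so $Q(t) \geq P(t)$.

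The point requiring the most care — rather than any computational difficulty — is reconciling the one-sided form of Lemma~\ref{corelowerbound} with the definition of $Q(t)$: the lemma guarantees the lower bound only at \emph{some} index $l$ near each $n$, not at every index. This is precisely why $Q(t)$ is defined using a $\limsup$ rather than a $\liminf$, since the lemma produces an infinite, unbounded family of ``good'' indices $l$, which is exactly what a limit superior detects. Combining the two inequalities gives $Q(t) = P(t)$ on the neighborhood of $h$ where both lemmas hold.
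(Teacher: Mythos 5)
Your proof is correct and takes essentially the same approach as the paper: the upper bound $Q(t)\leq P(t)$ is immediate (the paper uses $\mathcal{G}_n\subset\mathcal{L}_n$ where you use Lemma~\ref{coreupperlemma}, which amounts to the same thing), and the lower bound rests, exactly as in the paper, on the good indices produced by Lemma~\ref{corelowerbound} combined with the fact that a $\limsup$ dominates the limit along any unbounded family of indices. If anything, your version is slightly cleaner: the paper additionally routes the lower bound through Lemma~\ref{langboundlemma} and a chain of inequalities such as $e^{(n-N)P(t)}\leq e^{l_nP(t)}$ and $\frac{1}{n}\log(\cdot)\leq\frac{1}{l_n}\log(\cdot)$, whose validity implicitly depends on signs, whereas your direct take-logarithms-and-pass-to-the-$\limsup$ argument sidesteps these issues entirely.
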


\begin{proof}
For any $n \in \mathbb{N}$, $\mathcal{G}_n \subset \mathcal{L}_n$, and so $\sum_{\omega \in \mathcal{G}_n} c_\omega^t \leq \sum_{\omega \in \mathcal{L}_n} c_\omega^t$.  It follows that $Q(t) \leq P(t)$.\\

\medskip

Next, using Lemma \ref{corelowerbound}  we find constants $K_3$ and $N$ to construct  a sequence $(l_n)$ for which $n-N \leq l_n \leq n$ and $K_3 e^{l_n P(t)} \leq \sum_{\omega \in \mathcal{G}_{l_n}} c_\omega^t$.  Using this with Lemma \ref {langboundlemma}, we obtain the inequality
\[ \frac{K_3}{K_1 e^{N P(t)}} \sum_{\omega \in \mathcal{L}_n} c_\omega^t \leq K_3 e^{(n-N)P(t)} \leq K_3 e^{l_n P(t)} \leq \sum_{\omega \in \mathcal{G}_{l_n}} c_\omega^t.
\]
Building off of this inequality, we obtain
\begin{align*}
P(t)&= \lim_{n \to \infty} \frac{1}{n} \log \left( \frac{K_3}{K_1e^{N P(t)}} \sum_{\omega \in \mathcal{L}_n} c_\omega^t \right) \\
&\leq \lim_{n \to \infty} \frac{1}{n} \log ( \sum_{\omega \in \mathcal{G}_{l_n}} c_\omega^t) \\
&\leq \lim_{n \to \infty} \frac{1}{l_n} \log ( \sum_{\omega \in \mathcal{G}_{l_n}} c_\omega^t)\\
& \leq \limsup_{n \to \infty} \frac{1}{n} \log ( \sum_{\omega \in \mathcal{G}_n} c_\omega^t) = Q(t).
\end{align*}
Therefore, $Q(t) = P(t)$.
\end{proof}

An analogous argument proves the following.

\begin{proposition}
For $t$ in a neighborhood of $h$, $\bar{Q}(t) = \bar{P}(t)$.
\end{proposition}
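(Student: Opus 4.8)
The plan is to run the argument of the preceding proposition essentially verbatim, replacing every quantity by its barred counterpart: $c_i \to \bar{c}_i$, $c_\omega \to \bar{c}_\omega$, $P \to \bar{P}$, and $Q \to \bar{Q}$. As noted in the paragraph preceding Lemma~\ref{coreupperlemma}, the barred analogues of Lemmas~\ref{coreupperlemma}, \ref{langboundlemma}, and \ref{corelowerbound} follow by identical proofs, so the strategy is first to record these barred lemmas (valid for $t$ in a neighborhood of $h$, see below) and then to combine them exactly as in the proof that $Q(t) = P(t)$.

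The easy inclusion is immediate and global: for every $n$ we have $\mathcal{G}_n \subset \mathcal{L}_n$, hence $\sum_{\omega \in \mathcal{G}_n} \bar{c}_\omega^t \leq \sum_{\omega \in \mathcal{L}_n} \bar{c}_\omega^t$, and taking $\limsup_{n \to \infty} \frac1n \log(\cdot)$ gives $\bar{Q}(t) \leq \bar{P}(t)$ for all $t$. For the reverse inequality I would invoke the barred form of Lemma~\ref{corelowerbound} to produce constants $\bar{K}_3 > 0$ and $N$ together with a sequence $(l_n)$ satisfying $n - N \leq l_n \leq n$ and $\bar{K}_3 e^{l_n \bar{P}(t)} \leq \sum_{\omega \in \mathcal{G}_{l_n}} \bar{c}_\omega^t$. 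Chaining this with the barred upper bound $\sum_{\omega \in \mathcal{L}_n} \bar{c}_\omega^t \leq \bar{K}_1 e^{n\bar{P}(t)}$ from Lemma~\ref{langboundlemma} yields
\[ \frac{\bar{K}_3}{\bar{K}_1 e^{N\bar{P}(t)}} \sum_{\omega \in \mathcal{L}_n} \bar{c}_\omega^t \leq \bar{K}_3 e^{(n-N)\bar{P}(t)} \leq \bar{K}_3 e^{l_n \bar{P}(t)} \leq \sum_{\omega \in \mathcal{G}_{l_n}} \bar{c}_\omega^t, \]
and applying $\frac1n \log(\cdot)$, using $l_n \leq n$ and $l_n \to \infty$ precisely as in the unbarred case, gives $\bar{P}(t) \leq \bar{Q}(t)$. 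Together with the easy direction this establishes $\bar{Q}(t) = \bar{P}(t)$.

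The one step that genuinely requires attention — and the reason the conclusion is correctly stated on a neighborhood of $h$ rather than of $H$ — is the verification that the $\varepsilon$-gap inequalities driving the barred versions of Lemmas~\ref{langboundlemma} and \ref{corelowerbound}, namely $\log(\bar{c}_0^t) \leq \bar{P}(t) - \varepsilon$ and $\log(\bar{c}_1^t) \leq \bar{P}(t) - \varepsilon$, hold uniformly on a neighborhood of $h$. This is where the ordering $h \leq H$ enters. Since $\bar{P}$ is strictly decreasing with $\bar{P}(H) = 0$ and $h \leq H$, we have $\bar{P}(h) \geq 0$, whereas $\log(\bar{c}_i^h) = h \log \bar{c}_i < 0$ because $0 < \bar{c}_i < 1$. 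Thus $\log(\bar{c}_i^h) < \bar{P}(h)$ strictly at $t = h$, and continuity of both sides in $t$ upgrades this to a strict gap, hence to a uniform $\varepsilon > 0$, on a full neighborhood of $h$. This is the only place where the barred argument could fail to localize at $h$; once this gap is secured, the remaining steps are a routine transcription of the previous proof.
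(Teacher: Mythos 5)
Your proposal is correct and matches the paper's approach exactly: the paper's entire proof is the remark that ``an analogous argument'' (replacing $c_i$, $P$, $Q$ by $\bar{c}_i$, $\bar{P}$, $\bar{Q}$) carries over, which is precisely your transcription. Your final paragraph verifying the $\varepsilon$-gap $\log(\bar{c}_i^t) \leq \bar{P}(t) - \varepsilon$ near $t = h$ via $\bar{P}(h) \geq \bar{P}(H) = 0 > h\log\bar{c}_i$ is a correct and worthwhile justification of the one step the paper leaves implicit, namely why the barred lemmas localize at $h$ rather than at $H$.
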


 In the next section, we will show that the values $h$ and $H$ for which $P(h) = 0$ and $\bar{P}(H)=0$ give bounds for the box and Hausdorff dimension of the attractor, and so we are interested in $Q(h) =0$ and $\bar{Q}(H) = 0$. This leads us to our next proposition that will provide a better computational tool for finding these special values for $h$ and $H$. 

\begin{proposition}
\label{compprop}
For real number $h>0$, $Q(h) = 0$ if and only if $\sum_{s\in S} c_0^{sh}c_1^h=1$.
\end{proposition}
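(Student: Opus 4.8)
The plan is to recognize $\sum_{\omega \in \mathcal{G}_n} c_\omega^h$ as the $n$-th coefficient of an explicit rational generating function and to read off $Q(h)$ from its radius of convergence. Every word in $\mathcal{G}$ is a unique concatenation of blocks $0^s 1$ with $s \in S$; such a block has length $s+1$ and $h$-weight $c_0^{sh} c_1^h$, and the weight of a concatenation is the product of the block weights. Writing $b_n = \sum_{\omega \in \mathcal{G}_n} c_\omega^h$ and grouping the words of $\mathcal{G}_n$ by their sequence of blocks gives, with all terms nonnegative,
\[ \sum_{n \ge 1} b_n x^n \;=\; \sum_{k \ge 1} W(x)^k \;=\; \frac{W(x)}{1 - W(x)}, \qquad W(x) := \sum_{s \in S} c_0^{sh} c_1^h\, x^{s+1}, \]
valid for every $x \ge 0$ with $W(x) < 1$. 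Note that the identity in the proposition is exactly $W(1) = 1$.

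Next I would record the elementary analytic properties of $W$. Since $0 < c_0 < 1$ and $h > 0$, the factor $c_0^{sh} x^s = (c_0^h x)^s$ is geometrically summable precisely for $x < c_0^{-h}$, so $W$ has radius of convergence $R_W = c_0^{-h} > 1$ (or $R_W = \infty$ when $S$ is finite, where $W$ is a polynomial). On $[0, R_W)$ the series $W$ has positive coefficients, hence is continuous and strictly increasing, with $W(0) = 0$ and $W(x) \to \infty$ as $x \to R_W^{-}$ (for infinite $S$ this is because arbitrarily many terms tend to $1$ as $c_0^h x \to 1$). Consequently there is a unique $x^\ast \in (0, R_W)$ with $W(x^\ast) = 1$.

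The heart of the argument is to show that the radius of convergence $\rho$ of $\sum_n b_n x^n$ equals $x^\ast$, whence $Q(h) = \limsup_n \tfrac1n \log b_n = -\log \rho = -\log x^\ast$ by Cauchy--Hadamard. For $0 \le x < x^\ast$ we have $W(x) < 1$, so the generating-function identity shows $\sum_n b_n x^n = W(x)/(1-W(x)) < \infty$; this gives $\rho \ge x^\ast$. Conversely, letting $x \uparrow x^\ast$ makes $W(x) \uparrow 1$ and hence $W(x)/(1-W(x)) \to \infty$, so by monotone convergence $\sum_n b_n (x^\ast)^n = \infty$, forcing $\rho \le x^\ast$. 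Thus $\rho = x^\ast$ and $Q(h) = -\log x^\ast$. Because $W$ is strictly increasing with $W(x^\ast) = 1$, we have $x^\ast = 1$ if and only if $W(1) = 1$, that is, if and only if $\sum_{s \in S} c_0^{sh} c_1^h = 1$; and $Q(h) = -\log x^\ast = 0$ exactly when $x^\ast = 1$. Combining these equivalences yields the proposition.

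I expect the main obstacle to be passing rigorously from ``finiteness of $\sum_n b_n x^n$'' to the growth rate $Q(h)$, since many $b_n$ can vanish when $S$ forces a common period among the block lengths (for example $S = \{2\}$, where $b_n = 0$ unless $3 \mid n$); working throughout with $\limsup_n b_n^{1/n}$ via the radius of convergence, rather than with $\tfrac1n \log b_n$ pointwise, sidesteps this cleanly. A secondary point requiring care is the justification of the generating-function identity as an equality of nonnegative series, so that the rearrangement and the monotone-convergence step at $x = x^\ast$ are legitimate, together with the verification that $W$ blows up at $R_W$, which is what guarantees the existence of the root $x^\ast$.
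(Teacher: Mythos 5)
Your proposal is correct, and while it rests on the same combinatorial fact as the paper's proof---the unique decomposition of each core word into blocks $0^s1$, which yields the renewal identity
\[
\sum_{n \ge 1} \sum_{\omega \in \mathcal{G}_n} c_\omega^h\, x^n \;=\; \sum_{k \ge 1} \Bigl( \sum_{s \in S} c_0^{sh} c_1^h x^{s+1} \Bigr)^{k},
\]
of which the paper's equation (1) is the special case $x=1$ (with $h$ replaced by a variable $t$)---the analytic mechanism built on top of it is genuinely different. The paper keeps the generating variable fixed at $1$ and varies the exponent $t$: the root test gives convergence of the left-hand series for $t>h$ and divergence for $t<h$, the right-hand side converges or diverges according to whether $\sum_{s\in S} c_0^{st}c_1^t$ lies below or above $1$, and the equality at $t=h$ is then squeezed out using monotonicity and continuity in $t$; the converse direction additionally invokes strict monotonicity of $e^{Q(t)}$, a property inherited from the earlier identification of $Q$ with $P$. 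You instead freeze $t=h$ and vary an auxiliary variable $x$, identifying the radius of convergence $\rho$ of $\sum_n b_n x^n$ with the unique root $x^\ast$ of $W(x^\ast)=1$ and concluding $Q(h)=-\log x^\ast$ by Cauchy--Hadamard. Your route buys three things: it is self-contained at a single value of $h$, making no appeal to monotonicity or continuity of $Q$ as a function of $t$ (and hence no hidden reliance on the propositions identifying $Q$ with $P$); it proves strictly more, namely an explicit formula for $Q(h)$ at every $h>0$ rather than only a characterization of the zero; and the radius-of-convergence formulation cleanly absorbs the periodicity issue you note (values of $n$ with $\mathcal{G}_n=\emptyset$), which a pointwise reading of $\frac{1}{n}\log b_n$ would obscure. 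What the paper's route buys is brevity, since the monotonicity and continuity facts it leans on are already available from Section 3. Both arguments need the Tonelli-type justification for rearranging the nonnegative double sum into powers of the block sum; you flag this explicitly, along with the blow-up of $W$ at its radius of convergence that guarantees $x^\ast$ exists, whereas the paper leaves the analogous points implicit.
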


\begin{proof}
First, we let $Q(h)=0$ and consider the series $\sum_{n \geq 1} \sum_{\omega \in \mathcal{G}_n} c_\omega^t$.  
If $t<h$, then $\limsup_{n \to \infty} ( \sum_{\omega \in \mathcal{G}_n} c_\omega^t)^{1/n} = e^{Q(t)} > 1$, so by the root test, $\sum_{n \geq 1} \sum_{\omega \in \mathcal{G}_n} c_\omega^t$ diverges.  
Similarly, if $t>h$, then $\sum_{n \geq } \sum_{\omega \in \mathcal{G}_n} c_\omega^t$ converges. 
Next, notice that 
\begin{align}
\sum_{n \geq 1} \sum_{\omega \in \mathcal{G}_n} c_\omega^t = \sum_{k \geq 1} ( \sum_{s \in S} c_0^{st} c_1^t)^k,
\end{align}
and $\sum_{k \geq 1} ( \sum_{s \in S} c_0^{st} c_1^t)^k$ converges when $\sum_{s \in S} c_0^{st}c_1^t < 1$ and diverges when $\sum_{s \in S} c_0^{st} c_1^t >1$.  
Therefore, $\sum_{s \in S} c_0^{sh} c_0^h = 1$ where $h$ satisfies $Q(h)=0$.\\

\medskip

Next, assume that $\sum_{s \in S} c_0^{sh}c_1^h=1$.  Notice that $\sum_{s \in S} c_0^{st} c_1^t$ is decreasing with respect to $t$ and so $\sum_{k \geq 1} (\sum_{s \in S} c_0^{st} c_1^t)^k$ converges for $t>h$ and diverges for $t<h$.  
Therefore, $\sum_{n \geq 1} \sum_{\omega \in \mathcal{G}_n} c_\omega^t$ converges for $t>h$ and diverges for $t<h$.  
Because $e^{Q(t)} = \limsup_{n \to \infty} ( \sum_{\omega \in \mathcal{G}_n} c_\omega^t)^{1/n}$ is strictly decreasing, we can consider all possible outcomes for the root test applied to $\sum_{n \geq 1} \sum_{\omega \in \mathcal{G}_n} c_\omega^t$ and find that it must be the case the $e^{Q(h)} =1$ and so $Q(h)=0$.
\end{proof}

Again, an analogous argument proves the following. 

\begin{proposition} For real number $H>0$,
$\bar{Q}(H) = 0$ if and only if $\sum_{s\in S} \bar{c}_0^{sH}\bar{c}_1^H=1$.
\end{proposition}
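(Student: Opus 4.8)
The plan is to mirror exactly the argument given for the unbarred statement in Proposition~\ref{compprop}, replacing each occurrence of the contraction bounds $c_i$ by the upper bounds $\bar c_i$ and the function $Q$ by $\bar Q$. The entire logical skeleton carries over verbatim, so I would first record the key identity for the core set under the upper weights, namely
\[
\sum_{n \geq 1} \sum_{\omega \in \mathcal{G}_n} \bar c_\omega^t = \sum_{k \geq 1} \Bigl( \sum_{s \in S} \bar c_0^{st} \bar c_1^t \Bigr)^k,
\]
which follows because every $\omega \in \mathcal{G}$ decomposes uniquely as a concatenation of $k \geq 1$ blocks of the form $0^{s}1$ with $s \in S$, and the multiplicative definition $\bar c_\omega = \bar c_{\omega_1} \cdots \bar c_{\omega_n}$ turns this concatenation into a product; summing over the number of blocks $k$ produces the geometric-type series on the right.

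Next I would establish the forward direction. Assuming $\bar Q(H) = 0$, the function $e^{\bar Q(t)} = \limsup_{n \to \infty} \bigl( \sum_{\omega \in \mathcal{G}_n} \bar c_\omega^t \bigr)^{1/n}$ exceeds $1$ for $t < H$ and is less than $1$ for $t > H$, since $\bar Q$ is strictly decreasing (the same monotonicity used for $Q$). By the root test, the series $\sum_{n \geq 1} \sum_{\omega \in \mathcal{G}_n} \bar c_\omega^t$ then diverges for $t < H$ and converges for $t > H$. Combining this dichotomy with the identity above, the geometric series $\sum_{k \geq 1} \bigl( \sum_{s \in S} \bar c_0^{st} \bar c_1^t \bigr)^k$ has radius of convergence pinned precisely at $t = H$, which forces $\sum_{s \in S} \bar c_0^{sH} \bar c_1^H = 1$.

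For the converse I would assume $\sum_{s \in S} \bar c_0^{sH} \bar c_1^H = 1$ and observe that $t \mapsto \sum_{s \in S} \bar c_0^{st} \bar c_1^t$ is strictly decreasing in $t$ (each term $\bar c_0^{st} \bar c_1^t$ decreases since $0 < \bar c_i < 1$), so this quantity is less than $1$ for $t > H$ and greater than $1$ for $t < H$; hence the geometric series, and therefore $\sum_{n \geq 1} \sum_{\omega \in \mathcal{G}_n} \bar c_\omega^t$, converges for $t > H$ and diverges for $t < H$. Feeding this convergence behavior back through the root test together with the strict monotonicity of $e^{\bar Q(t)}$ leaves $e^{\bar Q(H)} = 1$ as the only consistent possibility, giving $\bar Q(H) = 0$.

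I do not expect any genuine obstacle here, since the proof is formally identical to the unbarred case and the authors have already flagged that such arguments are interchangeable. The only point requiring a moment of care is confirming that the block decomposition identity and the strict monotonicity of $\bar Q$ both survive the substitution $c_i \mapsto \bar c_i$; both do, precisely because the only structural facts used are the multiplicativity of the weights and the bound $0 < \bar c_i < 1$, neither of which distinguishes the lower constants from the upper ones.
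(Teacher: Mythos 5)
Your proposal is correct and is essentially the paper's own proof: the paper disposes of this statement by noting that ``an analogous argument'' to Proposition~\ref{compprop} applies, and your write-up is exactly that argument with $c_i$ replaced by $\bar{c}_i$ and $Q$ by $\bar{Q}$ (block decomposition identity, root test, and monotonicity, in both directions). Nothing further is needed.
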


\section{Main Theorem} 

To establish the lower bound for the Hausdorff dimension of a subfractal induced by an $S$-gap shift, we must find a suitable Borel probability measure that is supported on the subfractal that will satisfy the Mass Distribution Principle \cite{FalconerTech}.
  For that purpose, we define a measure on the symbolic space.  For $\omega \in \mathcal{L}(X)$, let $[ \! [ \omega ]\!] = \{ \tau \in X : \tau_i = \omega_i \text{ for } 1 \leq i \leq \ell(\omega)\}$ denote the cylinder set of $\omega$.  We fix $h$ to be the unique value such that $P(h) = 0$, let $\omega \in \mathcal{L}(X(S))$ and define 
\[ \nu_n( [ \! [ \omega]\! ] ) = \frac{\sum_{\omega \tau \in \mathcal{L}_{n + \ell(\omega)}} c_{\omega \tau}^h} {\sum_{\tau \in \mathcal{L}_{n+\ell(\omega)}} c_\tau^h}.\] 
Next, notice that for $n \geq 1$ and any $\omega \in \mathcal{L}$, by Corollary \ref{corboundh} we have
\[ 0 \leq \frac{\sum_{\omega \tau \in \mathcal{L}_{n+\ell(\omega)}} c_{\omega \tau}^h }{K_2} 
\leq \nu_n( [\![ \omega ]\!]) 
\leq \frac{c_\omega^h \sum_{\tau \in \mathcal{L}_n} c_\tau^h}{\sum_{\tau \in \mathcal{L}_{n+\ell(\omega)}} c_\tau^h} 
\leq K_2 c_\omega^h < \infty.\]

Therefore, we can use the Banach limit to define $\nu( [\![ \omega ]\!] ) = \Lim_{n \to \infty} \nu_n ([\![ \omega ]\!])$.  Next, notice that :
\begin{align*}
 \sum_{i=1}^p \nu ( [\![ \omega i ]\!]) &= \Lim_{n \to \infty} \sum_{i=1}^p \frac{\sum_{\omega i \tau \in \mathcal{L}_{n + \ell(\omega i)}} c_{\omega i \tau}^h }{\sum_{\tau \in \mathcal{L}_{n+ \ell(\omega i)}} c_\tau^h} \\
&= \Lim_{n \to \infty} \frac{\sum_{\omega \tau \in \mathcal{L}_{n+1+\ell(\omega)}} c_{\omega \tau}^h}{\sum_{\tau \in \mathcal{L}_{n + 1 + \ell(\omega)}} c_\tau^h} = \nu( [\![ \omega ]\!])
\end{align*}

By the Kolmogorov Extension Theorem, we can extend $\nu$ to a unique Borel probability measure $\xi$ on all of $X(S)$.  We can use this measure to define a measure $\mu_h = \xi \circ \pi^{-1}$, where $\pi$ is the coding map $\pi: X(S) \to \mathbb{R}^n$.  The measure $\mu_h$ is supported on $\mathcal{F}_{X(S)}$.  

\medskip

For the proof of our main theorem, we will use a special cover of the subfractal $\mathcal{F}_{X(S)}$.  For $0<r<1$, we define the cover $\mathcal{U}_r = \{ f_\omega(\mathcal{K}) : \abs{f_\omega(\mathcal{K})} < r \leq \abs{f_{\omega^-}(\mathcal{K})} \}$, where $\omega^- = \omega_1 \omega_2 \ldots \omega_{n-1}$ if $\omega \in \mathcal{L}_n$.  From \cite{Sattler}, we know that for $x \in \mathcal{F}_{X(S)}$, the open ball $B(x,r)$ intersects at most $M$ elements of $\mathcal{U}_r$, where $M$ is independent of $r$.  This gives us the last tool we need to complete the proof of the main theorem.

\begin{theorem}
\label{Sgaphausbound}
Let $h$ and $H$ be the unique values such that $P(h) = 0 = \bar{P}(H)$.  Then, $h \leq \dim_H(\mathcal{F}_{X(S)}) \leq H$, $h \leq \underline{\dim_B}(\mathcal{F}_{X(S)}) \leq H$, and  $h \leq \overline{\dim_B}(\mathcal{F}_{X(S)}) \leq H.$
\end{theorem}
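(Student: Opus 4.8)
The plan is to prove only the two extreme inequalities $h \le \dim_H(\mathcal{F}_{X(S)})$ and $\overline{\dim}_B(\mathcal{F}_{X(S)}) \le H$; the remaining four inequalities then follow at once from the standard chain $\dim_H(E) \le \underline{\dim}_B(E) \le \overline{\dim}_B(E)$ recorded in Section 2. Both extreme inequalities come from comparing the geometric cover $\mathcal{U}_r$ with the symbolic data, so I would first fix $r$ small enough that $r < \diam(\mathcal{K})$ and record the two estimates that drive everything. For $\omega \in \mathcal{U}_r$ the defining relations $|f_\omega(\mathcal{K})| < r \le |f_{\omega^-}(\mathcal{K})|$, combined with $c_\omega \diam(\mathcal{K}) \le |f_\omega(\mathcal{K})|$ and $|f_{\omega^-}(\mathcal{K})| \le \bar{c}_{\omega^-}\diam(\mathcal{K})$, yield
\[ c_\omega < \frac{r}{\diam(\mathcal{K})} \quad\text{and}\quad \bar{c}_\omega \ge \bar{c}_{\min}\,\frac{r}{\diam(\mathcal{K})}, \]
where $\bar{c}_{\min} := \min\{\bar{c}_0,\bar{c}_1\}$.

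For the lower bound I would apply the Mass Distribution Principle \cite{FalconerTech} to $\mu_h$. Since the diameters $|f_{\tau|_n}(\mathcal{K})|$ decrease strictly to $0$ along every $\tau \in X(S)$, each $\tau$ has a unique prefix $\omega(\tau)$ with $f_{\omega(\tau)}(\mathcal{K}) \in \mathcal{U}_r$, and $\pi(\tau) \in f_{\omega(\tau)}(\mathcal{K})$; hence $\pi^{-1}(B(x,r))$ is contained in the union of the cylinders $[\![ \omega ]\!]$ over those $\omega \in \mathcal{U}_r$ with $f_\omega(\mathcal{K}) \cap B(x,r) \ne \emptyset$. By the bounded-overlap property imported from \cite{Sattler} there are at most $M$ such $\omega$, so using $\xi([\![ \omega ]\!]) = \nu([\![ \omega ]\!]) \le K_2 c_\omega^h$ (the estimate preceding the theorem, via Corollary \ref{corboundh}) together with $c_\omega < r/\diam(\mathcal{K})$ I obtain
\[ \mu_h(B(x,r)) = \xi\big(\pi^{-1}(B(x,r))\big) \le M K_2 \Big(\frac{r}{\diam(\mathcal{K})}\Big)^h. \]
As $\mu_h$ is a probability measure supported on $\mathcal{F}_{X(S)}$, the Mass Distribution Principle gives $\dim_H(\mathcal{F}_{X(S)}) \ge h$.

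For the upper bound I would estimate the covering number directly. The sets in $\mathcal{U}_r$ have diameter strictly less than $r$ and cover $\mathcal{F}_{X(S)}$ (again by the unique-prefix observation), so $N_r(\mathcal{F}_{X(S)}) \le \#\mathcal{U}_r$. Splitting by word length, every $\omega \in \mathcal{U}_r$ with $|\omega| = n$ satisfies $\bar{c}_\omega^H \ge \bar{c}_{\min}^H (r/\diam(\mathcal{K}))^H$, so Corollary \ref{corboundH} gives
\[ \#\{\omega \in \mathcal{U}_r : |\omega| = n\} \le \bar{c}_{\min}^{-H}\Big(\frac{\diam(\mathcal{K})}{r}\Big)^H \sum_{\omega \in \mathcal{L}_n} \bar{c}_\omega^H \le L_2\,\bar{c}_{\min}^{-H}\Big(\frac{\diam(\mathcal{K})}{r}\Big)^H. \]
The two estimates above also confine the lengths occurring in $\mathcal{U}_r$ to an interval of the form $[a\log(1/r),\,b\log(1/r)]$, so only $O(\log(1/r))$ values of $n$ occur and $\#\mathcal{U}_r \le C\, r^{-H}\log(1/r)$ for a constant $C$. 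Substituting into the definition of $\overline{\dim}_B$ and letting $r \to 0$, the extra factor contributes $\log\log(1/r)/(-\log r) \to 0$, whence $\overline{\dim}_B(\mathcal{F}_{X(S)}) \le H$.

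The genuinely delicate point is this last step. Unlike the exactly self-similar case, the words selected by $\mathcal{U}_r$ do not all have comparable length — their lengths spread across a window growing like $\log(1/r)$ — so no single generation $\mathcal{L}_n$ controls $\#\mathcal{U}_r$, and the per-length bound must be summed over the whole window. The resolution I would emphasize is that this summation costs only a multiplicative $\log(1/r)$, which is invisible to the box-dimension limit, so the sharp exponent $H$ survives despite a logarithmically lossy count. On the Hausdorff side the analogous subtlety is purely measure-theoretic, namely the passage from the symbolic bound on cylinders to a bound on $\mu_h$ of a Euclidean ball, which is precisely where the bounded-overlap constant $M$ of \cite{Sattler} enters.
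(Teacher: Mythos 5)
Your lower-bound argument is essentially the paper's own proof: the same adapted cover $\mathcal{U}_r$, the same bounded-overlap constant $M$ imported from \cite{Sattler}, the estimate $\nu([\![ \omega ]\!]) \leq K_2 c_\omega^h$ coming from Corollary \ref{corboundh}, and the Mass Distribution Principle; that half is correct and matches the paper step for step. Your upper bound, however, takes a genuinely different (and correct) route. The paper works with the closure of the OSC open set $V$: it forms the cover $\mathcal{U}_\delta$ by sets $f_\omega(\cl(V))$, compares it against the single-generation cover $\mathcal{U}_k$ where $k$ is the minimal word length in $\mathcal{L}_\delta$, applies Corollary \ref{corboundH} to that one generation, and bounds $\abs{\mathcal{L}_\delta}$ from below via the smallest element of the cover (inequalities (4) and (5)), yielding the clean polynomial estimate $N_\delta(\mathcal{F}_{X(S)}) \leq L_2 \abs{\cl(V)}^H / (c_{min}^H \delta^H)$. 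You instead count $\mathcal{U}_r$ length by length: a Chebyshev-type comparison against $\sum_{\omega \in \mathcal{L}_n} \bar{c}_\omega^H \leq L_2$ gives at most $L_2 \bar{c}_{\min}^{-H} (\diam(\mathcal{K})/r)^H$ words of each length $n$, the diameter constraints confine the occurring lengths to a window of size $O(\log(1/r))$, and summing gives $N_r \leq C r^{-H}\log(1/r)$, whose logarithmic factor disappears in the box-dimension limit. Your version is a weaker covering estimate (the log loss would matter for, say, Minkowski content), but it buys two things: the upper bound never invokes the open set condition (you work with $\mathcal{K}$ throughout, so $\cl(V)$ is not needed there), and it sidesteps the paper's comparison of $\mathcal{U}_\delta$ with the generation-$k$ cover --- the least transparent step in the paper's argument, since the first inequality in (4) requires grouping words of $\mathcal{L}_\delta$ by their length-$k$ prefixes and is not justified in detail --- replacing it with elementary counting from Corollary \ref{corboundH} alone. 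The paper's version, in exchange, is shorter and gives the sharper pure power bound on covering numbers. Both arguments correctly reduce the remaining four inequalities to the chain $\dim_H \leq \underline{\dim}_B \leq \overline{\dim}_B$.
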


\begin{proof}
We will utilize the following well-known relationships between Hausdorff and box dimension \cite{FalconerFG}: 
\[ \dim_H(\mathcal{F}_{X(S)}) \leq \underline{\dim}_B(\mathcal{F}_{X(S)}) \leq \overline{\dim}_B(\mathcal{F}_{X(S)}).\]


We start by proving $h$ is a lower bound for the Hausdorff dimension.
Next, let $x \in \mathcal{F}_{X(S)}$ and consider the open ball $B(x,r)$ for some $0<r<1$ and the cover $\mathcal{U}_r = \{f_\omega(\mathcal{K}) : \abs{f_\omega(\mathcal{K})} < r \leq \abs{f_{\omega^-}(\mathcal{K})} \}$.  We know at most $M$ elements of $\mathcal{U}_r$ intersect $B(x,r)$, so let $\mathcal{U}_M = \{ f_\omega(\mathcal{K}) \in \mathcal{U}_r : f_\omega(\mathcal{K}) \cap B(x,r) \neq \emptyset\}$ and let $\mathcal{L}_M = \{\omega : f_\omega(\mathcal{K}) \in \mathcal{U}_M\}$.  Notice that for any $f_\omega(\mathcal{K}) \in \mathcal{U}_M \subseteq \mathcal{U}_r$ and corresponding $\omega \in \mathcal{L}_M$,

\begin{align}
c_\omega \abs{\mathcal{K}} &\leq \abs{f_\omega(\mathcal{K})} < r \text{  and  }\\
\nu ( [\![ \omega ]\!] ) &= \Lim_{n \to \infty} \frac{\sum_{\omega \tau \in \mathcal{L}_{n+ \ell(\omega)}} c_{\omega \tau}^h}{\sum_{\tau \in \mathcal{L}_{n+\ell(\omega)}} c_\tau^h} \leq \Lim_{n \to \infty} \frac{c_\omega^h \sum_{\tau \in \mathcal{L}_{n}} c_{\tau}^h}{\sum_{\tau \in \mathcal{L}_{n+\ell(\omega)}} c_\tau^h} \leq K_2 c_\omega^h.
\end{align}
Using (2) and (3), we find that
\begin{align*}
\frac{\mu_h(B(x,r))}{r^h} &\leq \frac{\sum_{\omega \in \mathcal{L}_M} \mu_h(f_\omega(\mathcal{K}))}{r^h} 
= \frac{\sum_{\omega \in \mathcal{L}_M} \nu([\![ \omega ]\!])}{r^h} \\
&\leq  \frac{\sum_{\omega \in \mathcal{L}_M} K_2 c_\omega^h}{r^h}  \leq \frac{M K_2 r^h}{r^h \abs{\mathcal{K}^h}} \\
&= \frac{MK_2}{\abs{\mathcal{K}}^h}
\end{align*}
This inequality holds for all $0<r<1$, and so $\limsup_{r \to 0} \frac{\mu_h(B(x,r))}{r^h} \leq \frac{MK_2}{\abs{\mathcal{K}}^h}$.  By the Mass Distribution Principle, it follows that $\mathcal{H}^h(\mathcal{F}_{X(S)}) \geq \frac{\mu_h(\mathcal{F}_{X(S)})\abs{\mathcal{K}}^h}{M K_2} > 0$.  Hence, $ h \leq \dim_H (\mathcal{F}_{X(S)})$ and so $h$ is also a lower bound for the box dimensions of $\mathcal{F}_{X(S)}$.\\

\medskip
It remains to show that $\overline{\dim}_B(\mathcal{F}_{X(S)}) \leq H$.  To construct covers with minimal overlap, we start with an open set $V$ as in the definition of the OSC and consider its closure $\cl (V)$.  Let $0 < \delta <1$ and consider the cover $\mathcal{U}_\delta = \{f_\omega(\cl (V)) : \abs{f_\omega(\cl (V))} < \delta \leq \abs{f_{\omega^-}(\cl (V))} \}$ and the collection of associated finite words $\mathcal{L}_\delta = \{ \omega : f_\omega (\cl (V)) \in \mathcal{U}_\delta\}$.  
For $k = \min \{ \ell(\omega) : \omega \in \mathcal{L}_\delta\}$, we define another cover $\mathcal{U}_k = \{ f_\omega(\cl (V)) : \omega \in \mathcal{L}_k\}$.  Notice that $\cup_{U \in \mathcal{U}_\delta} U \subseteq \cup_{U \in \mathcal{U}_k} U$.  Using this fact with Corollary \ref{corboundH}, it follows that 
\begin{align}
 \sum_{\omega \in \mathcal{L}_\delta} \abs{f_\omega(\cl (V))}^H 
\leq \sum_{\omega \in \mathcal{L}_k} \abs{f_\omega(\cl (V))}^H
\leq \sum_{\omega \in \mathcal{L}_k} \bar{c}_\omega^H \abs{\cl (V)}^H 
\leq L_2 \abs{\cl(V)}^H
\end{align}
The cover $\mathcal{U}_\delta$ contains finitely many elements, so there must exist some $\tau \in \mathcal{L}_\delta$ associated with the element of smallest diameter.  Specifically, let $\tau$ be the word such that  $\abs{f_\tau(\cl (V))} \leq \abs{f_\omega(\cl (V))}$ for all $\omega \in \mathcal{L}_\delta$.  For this $\tau$ and using the definition of $\mathcal{U}_\delta$, we obtain
\begin{align}
\sum_{\omega \in \mathcal{L}_\delta} \abs{f_\omega(\cl (V))}^H 
\geq \abs{f_\tau(\cl(V))}^H \abs{\mathcal{L}_\delta} 
\geq \abs{f_{\tau^-}(\cl (V))}^H c_{min}^H \abs{\mathcal{L}_\delta}
\geq \delta^H c_{min}^H \abs{\mathcal{L}_\delta}.
\end{align}

Let $N_\delta(\mathcal{F}_{X(S)})$ denote the smallest number of sets of diameter at most $\delta$ that cover $\mathcal{F}_{X(S)}$.  Using (4) and (5), we obtain
\[ L_2 \abs{\cl (V)}^H \geq \delta^H c_{min}^H \abs{\mathcal{L}_\delta} \geq \delta^H c_{min}^H N_\delta(\mathcal{F}_{X(S)}).\]

Therefore, $N_\delta (\mathcal{F}_{X(S)}) \leq \frac{L_2 \abs{\cl (V)}^H}{\delta^H c_{min}^H}$.  Using this we find that 
\[
\frac{\log (N_\delta(\mathcal{F}_{X(S)}))}{-\log \delta} \leq \frac{\log(L_2\abs{\cl (V)}^H) - \log(c_{min}^H) - H \log \delta}{-\log \delta}
\]
Therefore, by letting $\delta \to 0$, we find that 
\[
\overline{\dim}_B(\mathcal{F}_{X(S)}) = \limsup_{\delta \to 0} \frac{\log(N_\delta(\mathcal{F}_{X(S)}))}{-\log \delta} \leq H.
\]
It follows that $H$ is also an upper bound for the Hausdorff dimension and lower box dimension.
\end{proof}

\begin{example}
Let $X_F$ denote the Golden Mean shift,  a subshift of finite type on $\mathcal{A} = \{0,1\}$ with forbidden word list $F = \{11\}$.  We can also view $X_F$ as an $S$-gap shift with set $S = \mathbb{N}$.  Let $\mathcal{F}_{X_F}$ denote the subfractal induced by $X_F$, where $\mathcal{F}$ is an attractor of an IFS with maps $f_0$, $f_1$ and these maps have lower and upper contractive bounds of $c_0$, $\bar{c_0}$, $c_1$, $\bar{c_1}$, respectively.  Using the results above, we see that both the Hausdorff and box dimension of $\mathcal{F}_{X_F}$ is bounded by $h$ and $H$, where $P(h) = \bar{P}(H) = 0$.  Using Proposition \ref{compprop}, we find that 
\[ 1 = \sum_{s \in S} c_0^{sh}c_1^h = \sum_{n=1}^\infty c_0^{nh}c_1^h = \frac{c_1^hc_0^h}{1-c_0^h},\]
and a similar result involving $H$.  Therefore, we are interested in the values of $h$ and $H$ for which $c_0^h+c_0^hc_1^h=1$ and $\bar{c}_0^H+\bar{c}_0^H \bar{c}_1^H=1.$\\
We can also compute these bounds using techniques from \cite{Sattler} by viewing $X_F$ as an SFT.  In this approach, we must compute the maximal eigenvalue of 
$\begin{bmatrix}
c_0^t & c_1^t \\
c_0^t & 0 
\end{bmatrix}$
which is $\rho(t) = \frac12 ( c_0^t + \sqrt{c_0^{2t} + 4 c_0^tc_1^t})$.  We are interested in the value $h$ for which $\rho(h) = 1$, and with some algebra we find that $c_0^h + c_0^h c_1^h=1$.  Through a similar process, we also find $H$ by $\bar{c}_0^H + \bar{c}_0^H \bar{c}_1^H=1$.
\end{example}

\section{Acknowledgments}
The author thanks Do\u{g}an \c{C}\"{o}mez and Scott Corry for their helpful feedback and suggestions on this work.

\bibliographystyle{plain}
\bibliography{sgapbib}

\end{document}